\newtheorem{theorem}{Theorem}[section]
\newtheorem{lemma}[theorem]{Lemma}
\theoremstyle{remark}
\theoremstyle{remark}
\theoremstyle{remark}
\newtheorem{remark}[theorem]{Remark}
\makeatletter\@addtoreset{equation}{section}\makeatother
\newcommand{\Sym}{\operatorname{Sym}}
\newcommand{\R}{\mathbb R}
\begin{document}

\vspace{-20mm}
\begin{center}{\Large \bf
A chaotic decomposition for generalized stochastic processes with independent values}\end{center}

{\large Suman Das }\\ Department of Mathematics,
Swansea University, Singleton Park, Swansea SA2 8PP, U.K.;
e-mail: \texttt{380535@swansea.ac.uk}\vspace{2mm}

{\large Eugene Lytvynov}\\ Department of Mathematics,
Swansea University, Singleton Park, Swansea SA2 8PP, U.K.;
e-mail: \texttt{e.lytvynov@swansea.ac.uk}\vspace{2mm}

\begin{center}
{\bf Abstract}
\end{center}
{\small \noindent 
We extend the result of Nualart and Schoutens on chaotic decomposition of the $L^2$-space of a L\'evy process to the case of a generalized stochastic processes with independent values.}

\section{Introduction} Among all stochastic processes with independent increments, essentially only Brownian motion and Poisson process have a chaotic representation  property. The latter property means that, by using multiple stochastic integrals with respect to the centered stochastic process, one can construct a  unitary isomorphism between the $L^2$-space of the process and a  symmetric Fock space.
In the case of a L\'evy process, several approaches have been proposed in order to construct a Fock space-type realization of the corresponding $L^2$-space. In this paper, we will be concerned with the approach of Nualart and Schoutens  \cite{NS}, who constructed a representation of every square integrable functional of a L\'evy process
in terms of orthogonalized Teugels martingales. 
Recall that, for a given L\'evy process $(X_t)_{t\ge0}$, its $k$-th order Teugels martingale is defined by centering the power jump process
$$X_t^{(k)}:=\sum_{0<s\le t}(\Delta X_s)^k, \quad k\in\mathbb N.$$
For numerous applications of this result, see e.g.\ \cite{Oksendal,Schoutens}. We also refer to  \cite{Lin} for an extension of this result to the case of a L\'evy process taking values in $\R^d$, and to \cite{A,BL} for a Nualart--Schotens-type decomposition for noncommutative (in particular, free) L\'evy processes.

The aim of this note is to extend the Nualart--Schoutens decomposition to  the case of a generalized stochastic process with independent values.
Consider a standard triple
$\mathcal D\subset L^2(\mathbb R^d,dx)\subset \mathcal D'$, where $\mathcal D=C_0^\infty(\mathbb R^d)$ is the nuclear space of all smooth, compactly supported functions on $\R^d$, and $\mathcal D'$ is the dual space of $\mathcal D$ with respect to the center space $L^2(\R^d,dx)$, see e.g.\ \cite{BK} for detail. For $\omega\in\mathcal D'$ and $\varphi\in\mathcal D$, we denote by $\langle\omega,\varphi\rangle$ the dual pairing of $\omega$ and $\varphi$. 
Denote by $\mathcal C(\mathcal D')$ the cylinder $\sigma$-algebra on $\mathcal D'$. A generalized stochastic process is a probability measure $\mu$ on $(\mathcal D',\mathcal C(\mathcal D'))$. Thus, a generalized stochastic process is a random generalized function $\omega\in\mathcal D'$. One says that a generalized stochastic process has independent values, if for any $\varphi_1,\dots,\varphi_n\in\mathcal D$ which have mutually disjoint support, the random variables $\langle\omega,\varphi_1\rangle,\dots,\langle\omega,\varphi_n\rangle$ are independent. So, heuristically, we have that, for any $x_1,\dots,x_n\in\R^d$, the random variables $\omega(x_1),\dots,\omega(x_n)$ are independent. In the case where $d=1$, one can (at least heuristically) interpret $\omega(t)$ as the time $t$ derivative of a classical stochastic process $X=(X(t))_{t\in\R}$ with independent increments, so that, for $t\ge0$,  $X(t)=\int_0^t\omega(s)\,ds$. 

If a generalized stochastic process with independent values, $\mu$, has the property 
that the measure $\mu$ remains invariant under each transformation $x\mapsto x+a$ ($a\in\R^d$) of the underlying space, then one calls $\mu$ a L\'evy process (which is, for
 $d=1$, the time derivative of a classical L\'evy process.)
So, below, for a certain class of generalized stochastic processes with independent values, we will construct an orthogonal decomposition of the space $L^2(\mathcal D',\mu)$, which, in the case of a classical L\'evy process, will be exactly the Nualart--Schotens decomposition from \cite{NS}. This paper will also extend the results of \cite{L} for generalized stochastic processes being L\'evy processes. 

\section{Preliminaries}
We start by briefly recalling some results from \cite{D}.
Assume that for each $x \in \mathbb {R}^d$, $\sigma(x,ds)$ is a probability measure on
$(\mathbb{R},\mathcal{B}(\mathbb{R}))$. We also assume that for each $\Delta \in \mathcal{B}(\mathbb{R})$,
$
\mathbb{R}^d \ni x\mapsto \sigma (x, \Delta)
$
is a measurable mapping.
Hence,  we can define a $\sigma$-finite measure
$dx\, \sigma(x,ds)$ on $(\mathbb{R}^d \times \mathbb{R},\mathcal{B}(\mathbb{R}^d \times \mathbb{R}))$.
Let $\mathcal{B}_0 (\mathbb{R}^d)$ denote the collection of all sets
$\Lambda \in \mathcal{B}(\mathbb{R}^d)$ which are bounded. We will
additionally assume that, for each $\Lambda \in
\mathcal{B}_0(\mathbb{R}^d)$, there exists  $C_\Lambda >0$ such that
\begin{align} \label{equ 3.2}
\int_\mathbb{R} |s|^n \sigma(x, ds) \leq C_\Lambda^n n! \quad  n \in
\mathbb{N},
\end{align}
for all $x \in \Lambda$. We fix the Hilbert space
$
H = L^2(\mathbb{R}^d \times\mathbb{R},dx\,\sigma(x,ds))$.
We denote by
$\mathcal F(H)=\bigoplus_{n=0}^\infty H^{\odot n}n!$
the symmetric Fock space over $H$. Here $\odot$ denotes symmetric tensor product. We denote by $\mathfrak D$ the subset of $\mathcal F(H)$ which consists of all finite vectors $f=(f^{(0)},f^{(1)},\dots,f^{(n)},0,0,\dots)$ where each $f^{(k)}$ is a symmetric function on $(\R^d\times\R)^k$ which is obtained as the symmetrization of a finite sum of functions of the form
$$g^{(k)}(x_1,s_1,\dots,x_k,s_k)=\phi(x_1,\dots,x_k)s_1^{i_1}\dotsm s_k^{i_k},$$
where $\phi\in \mathcal D^{\otimes k}=C_0^\infty((\R^d)^k)$ and $i_1,\dots,i_k\in\mathbb Z_+=\{0,1,2,\dots\}$. For each $\varphi\in\mathcal D$, we define an operator $A(\varphi)$ in $\mathcal F(H)$ with domain $\mathfrak D$ by
\begin{equation}\label{rdtsw6u4ebg}
A(\varphi ):= a^+(\varphi \otimes m_0) + a^-(\varphi \otimes m_0) + a^0(\varphi\otimes m_1).\end{equation}
Here and below, for $i\in\mathbb Z_+:=\{0,1,2,\dots\}$,
$$(\varphi\otimes m_i)(x,s):=\varphi(x)s^i,$$
  $a^+(\varphi \otimes m_i)$ is the creation operator corresponding to $\varphi\otimes m_i$:
$$  a^+(\varphi \otimes m_i)f^{(k)}=f^{(k)}\odot (\varphi\otimes m_i),\quad f^{(k)}\in H^{\odot k},$$
$a^-(\varphi \otimes m_i)$ is the corresponding annihilation operator:
$$  a^-(\varphi \otimes m_i)f^{(k)}=k\int_{\R^d\times\R}dy\, \sigma(y,du)\varphi(y)u^if^{(k)}(y,u,\cdot),$$
and $a^0(\varphi\otimes m_i)$ is the neutral operator corresponding to $\varphi\otimes m_i$:
\begin{multline*}\big(a^0(\varphi \otimes m_i)f^{(k)}\big)(x_1,s_1,\dots,x_k,s_k)\\=\big(\varphi(x_1)s_1^i+\dots+\varphi(x_k)s_k^i\big)f^{(k)}(x_1,s_1,\dots,x_k,s_k). \end{multline*}
Note that $A(\varphi)$ maps $\mathfrak D$ into itself, and it is a symmetric operator in $\mathcal F(H)$.

\begin{theorem}\label{ftydyr7rdaufc}
For each $\varphi\in\mathcal D$, the operator $A(\varphi)$ is essentially self-adjoint on $\mathfrak D$. Furthermore, there exists a unique probability measure $\mu$ on $\mathcal D'$ such that the linear operator $I:\mathcal F(H)\to L^2(\mathcal D',\mu)$ given through $I\Omega=1$, $\Omega$ being the vacuum vector $(1,0,0,\dots)$, and
$$ I(A(\varphi_1)\dotsm A(\varphi_n)\Omega)=\langle\omega,\varphi_1\rangle\dotsm \langle\omega,\varphi_n\rangle,$$
is a unitary operator. The Fourier transform of the measure $\mu$ is given by
\begin{equation}\label{the3.22}
\begin{split}
\int_{\mathcal{D}'} e^{i\langle \varphi,\omega \rangle}
\mu(d\omega) &= \exp\bigg[-\frac{1}{2} \int_{\mathbb{R}^d}dx\,
\sigma(x,\{0\})\varphi(x)^2  \\
&\quad+ \int_{\mathbb{R}^d} dx \int_{\mathbb{R}^*} \sigma(x, ds)
\frac{1}{s^2}(e^{i\varphi(x)s} - i\varphi(x)s - 1 )\bigg],
\end{split}
\end{equation}
where $\mathbb R^*:=\mathbb R\setminus\{0\}$. In particular, $\mu$ is a generalized stochastic process with independent values.
\end{theorem}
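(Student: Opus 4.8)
The plan is to regard $\{A(\varphi)\}_{\varphi\in\mathcal D}$ as a family of strongly commuting self-adjoint operators and to obtain $\mu$ as its joint spectral measure over the nuclear triple, with each $A(\varphi)$ becoming multiplication by $\langle\omega,\varphi\rangle$. First I would establish essential self-adjointness of each $A(\varphi)$ via Nelson's analytic vector theorem: since $A(\varphi)$ is symmetric and preserves $\mathfrak D$, it suffices to show that every $f\in\mathfrak D$ is analytic, i.e.\ $\sum_{n\ge0}\frac{t^n}{n!}\|A(\varphi)^n f\|<\infty$ for some $t>0$. In $A(\varphi)^n f$ the parts $a^\pm(\varphi\otimes m_0)$ change the particle number by $\pm1$ and contribute only factors polynomial in the degree together with $\|\varphi\otimes m_0\|_H$, whereas the neutral part $a^0(\varphi\otimes m_1)$ multiplies the $j$-th argument by $\varphi(x_j)s_j$ and is the sole source of growing powers of $s$. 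Estimating the resulting integrals on the bounded support $\Lambda$ of $\varphi$ by the moment bound \eqref{equ 3.2}, $\int_\R|s|^n\sigma(x,ds)\le C_\Lambda^n n!$, yields a bound of the form $\|A(\varphi)^n f\|\le D\,R^n\,n!$ and hence a positive radius of analyticity. Thus $\mathfrak D$ is a dense set of analytic vectors and $A(\varphi)$ is essentially self-adjoint; write $\tilde A(\varphi)$ for its closure.

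Next I would verify that the family commutes. A direct computation on $\mathfrak D$ using $[a^-(\xi),a^+(\eta)]=\langle\xi,\eta\rangle_H$, $[a^0(\zeta),a^+(\eta)]=a^+(\zeta\eta)$ and $[a^-(\xi),a^0(\zeta)]=a^-(\zeta\xi)$ (pointwise products in $H$) gives $[A(\varphi),A(\psi)]=0$ on $\mathfrak D$: the scalar contributions cancel because $\langle\varphi\otimes m_0,\psi\otimes m_0\rangle_H$ is symmetric in $\varphi,\psi$, and the surviving creation and annihilation contributions cancel because the pointwise products $(\varphi\otimes m_0)(\psi\otimes m_1)$ and $(\psi\otimes m_0)(\varphi\otimes m_1)$ coincide, both equal to $\varphi\psi\otimes m_1$. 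Since $\mathfrak D$ is a common invariant core of analytic vectors on which the $A(\varphi)$ commute, the closures $\tilde A(\varphi)$ commute strongly (their resolutions of the identity commute, by Nelson's theorem), and $\varphi\mapsto A(\varphi)$ is linear. Applying the projection spectral theorem for commuting self-adjoint operators \cite{BK} to this family over the rigging $\mathcal D\subset L^2(\R^d,dx)\subset\mathcal D'$ then produces a probability measure $\mu$ on $(\mathcal D',\mathcal C(\mathcal D'))$ and a unitary $I\colon\mathcal F(H)\to L^2(\mathcal D',\mu)$ with $I\Omega=1$ sending each $\tilde A(\varphi)$ to multiplication by $\langle\omega,\varphi\rangle$; evaluating $I$ on $A(\varphi_1)\cdots A(\varphi_n)\Omega$ gives the asserted formula. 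Uniqueness of $\mu$ follows since these relations fix all joint moments $\int_{\mathcal D'}\langle\omega,\varphi_1\rangle\cdots\langle\omega,\varphi_n\rangle\,\mu(d\omega)=\langle A(\varphi_1)\cdots A(\varphi_n)\Omega,\Omega\rangle$, and the moment problem is determinate because \eqref{equ 3.2} guarantees finite exponential moments.

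It then remains to compute the Fourier transform $\hat\mu(\varphi)=\int_{\mathcal D'}e^{i\langle\omega,\varphi\rangle}\mu(d\omega)=\langle e^{i\tilde A(\varphi)}\Omega,\Omega\rangle_{\mathcal F(H)}$, which is the characteristic function of the law of $\langle\omega,\varphi\rangle$, i.e.\ the spectral measure of $\tilde A(\varphi)$ in the vacuum state. Because $A(\varphi)$ acts locally in $x$, the Fock space factorizes over disjoint regions of $\R^d$, so $\log\hat\mu(\varphi)$ is additive under splitting $\varphi$ into pieces with disjoint supports and is therefore an integral $\int_{\R^d}dx\,(\cdots)$ of a pointwise characteristic exponent. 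That exponent I would identify by computing the vacuum moments $\langle A(\varphi)^n\Omega,\Omega\rangle$ through the commutation combinatorics of $a^\pm(\varphi\otimes m_0)$ and $a^0(\varphi\otimes m_1)$ and matching them to the moments of the infinitely divisible law with exponent \eqref{the3.22}: the atom of $\sigma(x,\cdot)$ at $s=0$ produces the Gaussian term $-\tfrac12\sigma(x,\{0\})\varphi(x)^2$, while the part on $\R^*$ produces $\frac1{s^2}\big(e^{i\varphi(x)s}-i\varphi(x)s-1\big)$, the weight $1/s^2$ arising precisely because creation and annihilation carry $m_0$ whereas the neutral operator carries $m_1$. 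Finally, if $\varphi_1,\dots,\varphi_n\in\mathcal D$ have mutually disjoint supports then, the exponent being an integral over $x$ of a pointwise functional of $\varphi$, one has $\hat\mu\big(\sum_j t_j\varphi_j\big)=\prod_j\hat\mu(t_j\varphi_j)$ for all $t_1,\dots,t_n\in\R$, so $\langle\omega,\varphi_1\rangle,\dots,\langle\omega,\varphi_n\rangle$ are independent and $\mu$ is a generalized stochastic process with independent values.

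The main obstacle I anticipate is the analytic-vector estimate of the first step together with the passage in the second from algebraic commutativity on $\mathfrak D$ to strong commutativity of the closures: the factorial bound on $\|A(\varphi)^n f\|$ must be extracted sharply enough from \eqref{equ 3.2} to yield a genuine radius of analyticity, and this is exactly what legitimizes both the self-adjointness and the joint spectral analysis. The identification of $\hat\mu$ is then more computational than conceptual, the delicate points being the emergence of the $1/s^2$ weight and the clean separation of the $s=0$ atom exactly as in \eqref{the3.22}.
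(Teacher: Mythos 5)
Your proposal is correct in outline, and it follows essentially the same route as the source this paper relies on: the paper itself gives no proof of Theorem \ref{ftydyr7rdaufc} (it is recalled from the thesis \cite{D}, in the tradition of the projection spectral theorem for commuting Jacobi-type fields of \cite{BK,L}), and that argument is exactly your combination of Nelson's analytic-vector theorem (powered by the moment bound \eqref{equ 3.2}), commutativity of the $A(\varphi)$'s, the projection spectral theorem over the rigging $\mathcal D\subset L^2(\R^d,dx)\subset\mathcal D'$, and identification of the vacuum cumulants to get \eqref{the3.22}. The one point to state with care is the passage to strong commutativity: commuting on a common invariant core of vectors analytic for each operator \emph{separately} is not by itself sufficient (Nelson's counterexample), so you should invoke the Nussbaum-type criterion requiring analytic (or quasi-analytic) vectors also for sums $A(\varphi)+A(\psi)$ --- which is indeed available here precisely because of the linearity $A(\varphi)+A(\psi)=A(\varphi+\psi)$ that you mention.
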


Note that, if the measure $\sigma(ds)=\sigma(x,ds)$ is the same for all $x\in\R^d$, then $\mu$ is a L\'evy process. 

\section{An orthogonal decomposition of a Fock space}

We will now discuss  an orthogonal decomposition of a general symmetric Fock space. This decomposition generalizes the well-known basis of occupation numbers in the Fock space, see e.g.\ \cite{BK}.

In this section, we will denote by $H$ any real separable Hilbert space. Let $(H_k)_{k=0}^\infty$ be a sequence of closed subspaces of $H$ such that
$H= \bigoplus_{k=0}^\infty H_k$. Let $n \geq 2$. Then clearly
\begin{align}
H^{\otimes n} &= \bigg(\bigoplus_{k_1=0}^\infty H_{k_1}\bigg) \otimes \bigg(\bigoplus_{k_2=0}^\infty H_{k_2}\big) \otimes \cdots
\otimes \bigg(\bigoplus_{k_n=0}^\infty H_{k_n}\bigg) \notag\\
&=\bigoplus_{(k_1,k_2, \dots ,k_n) \in \mathbb{Z}_+^n} H_{k_1} \otimes H_{k_2} \otimes \cdots \otimes H_{k_n}.\label{ns1}
\end{align}
Denote by $\operatorname{Sym}_n$ the orthogonal projection of $H^{\otimes n}$ onto $H^{\odot n}$. 
Recall that, for any
$f_1,f_2,\ldots,f_n \in H$
\begin{equation}\label{ns2}
f_1\odot \dotsm \odot f_n=\Sym_n\,f_1 \otimes \cdots \otimes f_n =\frac1{n!} \sum_{\sigma \in S_n} f_{\sigma(1)} \otimes \cdots \otimes f_{\sigma(n)}.
\end{equation} (Here, $S_n$ denotes the symmetric group of order $n$.)
For each $(k_1,k_2, \dots ,k_n) \in \mathbb{Z}_+^n$, let $H_{k_1}
\odot H_{k_2} \odot \cdots \odot H_{k_n}$ denote the Hilbert space $\Sym_n (H_{k_1} \otimes H_{k_2} \otimes \cdots \otimes H_{k_n})$, i.e.,
the space of all $\Sym_n$-projections of elements of $H_{k_1} \otimes H_{k_2} \otimes \cdots \otimes H_{k_n}$.

Assume that $(k_1,k_2, \dots ,k_n) \in \mathbb{Z}_+^n$ , $(l_1,l_2, \dots ,l_n) \in \mathbb{Z}_+^n$ are such that there exists a permutation
$\sigma \in S_n$ such that
\begin{equation}\label{ns3}
(k_1,k_2, \dots ,k_n) = (l_{\sigma (1)},l_{\sigma(2)}, \dots ,l_{\sigma(n)}).
\end{equation}
Then
\begin{align}\label{ns4}
H_{k_1} \odot H_{k_2} \odot \cdots \odot H_{k_n} = H_{l_1} \odot H_{l_2} \odot \cdots \odot H_{l_n}.
\end{align}
Indeed, take any $f_1 \in H_{l_1}, f_2 \in H_{l_2}, \ldots, f_n \in H_{l_n}$. Then
\begin{align}\label{ns5}
f_1 \odot f_2 \odot \cdots \odot f_n= f_{\sigma(1)}\odot f_{\sigma(2)}\odot  \cdots \odot  f_{\sigma(n)}.
\end{align}
We have $f_{\sigma(i)} \in H_{l_{\sigma(i)}} =H_{k_i}$. Therefore, the vector in \eqref{ns5} belongs to $H_{k_1}\odot H_{k_2}\odot  \cdots \odot  H_{k_n}$. Since the set of all vectors of the form $f_1 \odot f_2 \odot \cdots \odot f_n$ with
$f_i \in H_{l_i}$ is total in $H_{l_1} \odot  H_{l_2}\odot  \cdots \odot H_{l_n}$, we therefore conclude that
$$H_{l_1} \odot  H_{l_2}\odot  \cdots \odot H_{l_n} \subset H_{k_1} \odot  H_{k_2}\odot  \cdots \odot H_{k_n}$$
By inverting the argument, we obtain the inverse conclusion, and so formula \eqref{ns4} holds.

If no permutation $\sigma \in S_n$ exists which satisfies \eqref{ns3}, then
\begin{align}\label{ns6}
H_{k_1} \odot H_{k_2} \odot \cdots \odot H_{k_n} \bot \; H_{l_1} \odot H_{l_2} \odot \cdots \odot H_{l_n}.
\end{align}
Indeed, take any $f_i \in H_{k_i}$, $g_i \in H_{l_i}$, $i = 1, 2, \ldots, n$. Then, since $\Sym_n$ is an orthogonal projection,
\begin{align*}
&\big(f_1\odot f_2 \odot  \cdots \odot  f_n, g_1 \odot  g_2 \odot  \cdots \odot  g_n\big)_{H^{\odot n}} \\
& \quad = \bigg( \Sym_n \big(f_1\otimes f_2 \otimes  \cdots \otimes  f_n\big), g_1 \otimes  g_2 \otimes  \cdots \otimes  g_n\bigg)_{H^{\otimes n}}\\
& \quad = \frac{1}{n!}\sum_{\sigma \in S_n} \prod_{i=1}^n (f_{\sigma(i)}, g_i)_H  = \frac{1}{n!}\sum_{\sigma \in S_n} \prod_{i=1}^n (f_i, g_{\sigma(i)})_H = 0.
\end{align*}
Since the vectors of the form $f_1\odot f_2 \odot  \cdots \odot  f_n$ with $f_i \in H_{k_i}$ and
$g_1 \odot  g_2 \odot  \cdots \odot  g_n$ with $g_i \in H_{l_i}$ form a total set in
$H_{k_1} \odot  H_{k_2} \odot  \cdots \odot  H_{k_n}$ and $H_{l_1} \odot  H_{l_2} \odot  \cdots
\odot  H_{l_n}$, respectively, we get \eqref{ns6}.

By \eqref{ns1}, the closed linear span of the spaces $H_{k_1} \odot  H_{k_2} \odot  \cdots \odot  H_{k_n}$ with $(k_1, k_2, \ldots , k_n) \in \mathbb Z_+^n$ coincides with $H^{\odot n}$. Hence, by \eqref{ns4}
and \eqref{ns6}, we get the orthogonal decomposition
\begin{align}\label{nu1}
H^{\odot n}=\bigoplus_{\alpha \in \,\mathbb{Z}_{+,0}^\infty,\, |\alpha|=n} H_0^{\odot \alpha_0} \odot H_1^{\odot \alpha_1} \odot H_2^{\odot \alpha_2}\odot \cdots .
\end{align}
Here $\mathbb{Z}_{+,0}^\infty$
denotes the set of indices $\alpha=(\alpha_0,\alpha_1,\alpha_2,\dots)$ such that all $\alpha_i\in\mathbb Z_+$ and 
$|\alpha|:=\alpha_0+\alpha_1+\alpha_2+\dotsm<\infty$.
Hence, by (\ref{nu1}), we get the following

\begin{lemma} \label{lem1.11}
We have the orthogonal decomposition of the symmetric Fock space $\mathcal F (H)=\bigoplus_{n=0}^\infty H^{\odot n}n!$\,:
\begin{equation}\label{ur6u7}
\mathcal{F}(H) =\bigoplus_{\alpha \in \mathbb{Z}_{+,0}^\infty}\big(H_0^{\odot \alpha_0} \odot H_1^{\odot \alpha_1} \odot H_2^{\odot \alpha_2} \cdots \big)|\alpha|!\,.
\end{equation} 
\end{lemma}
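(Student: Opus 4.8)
The plan is to obtain Lemma~\ref{lem1.11} as an immediate consequence of the already-established decomposition \eqref{nu1} together with the definition of the Fock space as the weighted orthogonal sum $\mathcal F(H)=\bigoplus_{n=0}^\infty H^{\odot n}n!$. The whole argument is a bookkeeping step: substitute \eqref{nu1} into each $n$-particle summand and reorganize the resulting iterated direct sum into a single sum indexed by $\alpha$.

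First, I would recall that, by the very construction of $\mathcal F(H)$, the $n$-particle spaces $H^{\odot n}n!$ for distinct $n$ are mutually orthogonal, so it suffices to decompose each of them separately and then collect the pieces. For a fixed $n\ge 2$, formula \eqref{nu1} already provides the orthogonal splitting
\begin{equation*}
H^{\odot n}=\bigoplus_{\alpha \in \mathbb{Z}_{+,0}^\infty,\, |\alpha|=n} H_0^{\odot \alpha_0} \odot H_1^{\odot \alpha_1} \odot H_2^{\odot \alpha_2}\odot \cdots,
\end{equation*}
while the cases $n=0$ and $n=1$ are trivial (the vacuum corresponds to $\alpha=0$, and $H=\bigoplus_k H_k$ gives the $|\alpha|=1$ terms). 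Thus each $n$-particle space is the orthogonal sum of the subspaces $H_0^{\odot \alpha_0}\odot H_1^{\odot \alpha_1}\odot\cdots$ over those $\alpha$ with $|\alpha|=n$.

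Second, I would combine the two layers of orthogonality: across particle numbers (from the Fock-space grading) and within a fixed particle number (from \eqref{ns4} and \eqref{ns6}, as used to derive \eqref{nu1}). Summing the displayed splitting over all $n\ge 0$ and carrying along the weight $n!$ attached to the $n$-particle space, the iterated sum over $n$ and over $\{\alpha:|\alpha|=n\}$ collapses into a single sum over $\alpha\in\mathbb{Z}_{+,0}^\infty$, because every such $\alpha$ has exactly one finite value $|\alpha|$ and hence appears under precisely one value of $n$. On each piece the weight $n!$ equals $|\alpha|!$ since $|\alpha|=n$ there, which yields \eqref{ur6u7}.

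There is no real obstacle here; the only point requiring care is the weighting convention, namely tracking that the scalar factor $n!$ on the $n$-particle space is transported unchanged onto every summand $H_0^{\odot \alpha_0}\odot H_1^{\odot \alpha_1}\odot\cdots$ of $H^{\odot n}$ and that it coincides with $|\alpha|!$ after the re-indexing. Once this is observed, \eqref{ur6u7} follows directly.
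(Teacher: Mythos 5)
Your proposal is correct and follows exactly the paper's route: the paper obtains Lemma~\ref{lem1.11} directly from the decomposition \eqref{nu1} (itself derived from \eqref{ns1}, \eqref{ns4}, and \eqref{ns6}), and your argument merely spells out the re-indexing of the double sum over $n$ and $\{\alpha:|\alpha|=n\}$ with the weight $n!=|\alpha|!$, a bookkeeping step the paper leaves implicit.
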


Next, we have:

\begin{lemma}\label{lem2}
Let  $\alpha \in \mathbb{Z}_{+,0}^\infty$\,. Then 
\begin{align}\label{ns7}
&\Sym_{|\alpha|}: \big(H_0^{ \odot \alpha_0} \otimes H_1^{\odot  \alpha_1} \otimes H_2^{\odot  \alpha_2} \otimes \cdots \big){\alpha_0! \alpha_1! \alpha_2! \cdots} \nonumber\\
& \quad \rightarrow \big(H_0^{ \odot \alpha_0} \odot H_1^{\odot  \alpha_1} \odot H_2^{\odot  \alpha_2} \odot \cdots \big)\, |\alpha|!
\end{align}
is a unitary operator.
\end{lemma}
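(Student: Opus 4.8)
\emph{Proof plan.} The plan is to realize both Hilbert spaces as subspaces of $H^{\otimes|\alpha|}$ and to recognize the map in question as nothing but the restriction of the projection $\Sym_{|\alpha|}$. Writing $n=|\alpha|$ and fixing an ordering of the blocks, the domain $H_0^{\odot\alpha_0}\otimes H_1^{\odot\alpha_1}\otimes\cdots$ sits inside $H^{\otimes n}$ as the subspace in which the first $\alpha_0$ slots range over $H_0$ (symmetrized among themselves), the next $\alpha_1$ slots over $H_1$, and so on; its norm is the one inherited from $H^{\otimes n}$, rescaled by $\alpha_0!\alpha_1!\cdots$. The target $H_0^{\odot\alpha_0}\odot H_1^{\odot\alpha_1}\odot\cdots=\Sym_n(H_0^{\otimes\alpha_0}\otimes H_1^{\otimes\alpha_1}\otimes\cdots)$ carries the $H^{\otimes n}$-norm rescaled by $n!$. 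Thus it suffices to prove that $\Sym_n$ acts isometrically with respect to these two rescaled norms and that it is onto.

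For the isometry, I would test on the total set of elementary vectors $F=\big(\bigodot_{j}f_{0,j}\big)\otimes\big(\bigodot_{j}f_{1,j}\big)\otimes\cdots$ with $f_{k,j}\in H_k$, and similarly $F'$ built from $f'_{k,j}\in H_k$. Using that the $H^{\otimes m}$-inner product of two symmetric products equals the permanent of their Gram matrix divided by $m!$, one computes, on the domain side,
\[(F,F')_{\mathrm{dom}}=\Big(\prod_k\alpha_k!\Big)\prod_k\frac{1}{\alpha_k!}\operatorname{perm}(G_k)=\prod_k\operatorname{perm}(G_k),\]
where $G_k=\big((f_{k,i},f'_{k,j})_H\big)_{i,j}$ is the within-block cross-Gram matrix and the factors $\alpha_k!$ cancel. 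On the target side, since averaging over the whole group after averaging over the Young subgroup equals averaging over the whole group, $\Sym_n$ absorbs the block symmetrizations, so $\Sym_n F$ is the full symmetric product of all $n$ vectors, whence
\[(\Sym_n F,\Sym_n F')_{\mathrm{tar}}=n!\cdot\frac{1}{n!}\operatorname{perm}(G)=\operatorname{perm}(G),\]
with $G$ the full $n\times n$ cross-Gram matrix of all the $f$'s against all the $f'$'s.

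The crux is then the identity $\operatorname{perm}(G)=\prod_k\operatorname{perm}(G_k)$. This is where the hypothesis $H=\bigoplus_k H_k$ enters decisively: for $k\neq l$ one has $H_k\perp H_l$, so every entry of $G$ pairing a vector from block $k$ with one from block $l$ vanishes, i.e.\ $G$ is block diagonal with diagonal blocks $G_0,G_1,\dots$. In $\operatorname{perm}(G)=\sum_{\sigma\in S_n}\prod_i G_{i,\sigma(i)}$ only the permutations preserving the block decomposition survive, and these factor through the Young subgroup $\prod_k S_{\alpha_k}$, giving exactly $\prod_k\operatorname{perm}(G_k)$. Hence $(F,F')_{\mathrm{dom}}=(\Sym_n F,\Sym_n F')_{\mathrm{tar}}$, and by bilinearity and density $\Sym_n$ preserves inner products on the whole domain. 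I expect this factorization of the permanent, together with the clean cancellation of the factorials, to be the one genuinely substantive point; everything else is bookkeeping, and a small example ($n=2$, $\alpha_0=\alpha_1=1$) already shows that dropping orthogonality destroys the isometry.

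Finally, for surjectivity I would note that the block symmetrization $P=\bigotimes_k\Sym_{\alpha_k}$ is the average over the Young subgroup $\prod_k S_{\alpha_k}\subseteq S_n$, so $\Sym_n\circ P=\Sym_n$ on $H_0^{\otimes\alpha_0}\otimes H_1^{\otimes\alpha_1}\otimes\cdots$. Therefore $\Sym_n\big(H_0^{\odot\alpha_0}\otimes\cdots\big)=\Sym_n\big(H_0^{\otimes\alpha_0}\otimes\cdots\big)=H_0^{\odot\alpha_0}\odot\cdots$, which is precisely the target. Being an inner-product-preserving surjection, $\Sym_{|\alpha|}$ is unitary, as claimed.
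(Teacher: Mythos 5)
Your proposal is correct and takes essentially the same route as the paper: surjectivity via the identity $\Sym_n=\Sym_n(\Sym_{\alpha_0}\otimes\Sym_{\alpha_1}\otimes\cdots)$, and isometry by testing on a total set of product vectors, with the orthogonality $H_k\perp H_l$ ($k\neq l$) killing every permutation that does not preserve the blocks. The only cosmetic difference is that the paper tests on power vectors $f_0^{\otimes\alpha_0}\otimes f_1^{\otimes\alpha_1}\otimes\cdots$, so the permanent factorization you spell out reduces there to a simple product of within-block sums.
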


\begin{proof} We start the proof with the following well-known observation.
Let $k, l \geq 1$, $n: = k + l$. Then
$\Sym_n = \Sym_n (\Sym_k \otimes \Sym_l)$. Hence, for any $\alpha \in \mathbb{Z}_{+,0}
^\infty$, $|\alpha| = n$, we get $\Sym_n = \Sym_n (\Sym_{\alpha_0} \otimes \Sym_{\alpha_1} \otimes \Sym_{\alpha_2} \otimes \cdots)$.
Therefore, we have the following equality of subspaces of $H^{\otimes n}:$
\begin{align*}
&H^{\odot \alpha_0}_0 \odot H_1^{\odot \alpha_1} \odot H_2^{\odot \alpha_2} \odot \cdots\\
&\quad =
\Sym_n \big(H^{\otimes \alpha_0}_0 \otimes H_1^{\otimes \alpha_1} \otimes H_2^{\otimes \alpha_2} \otimes \cdots\big)\\
& \quad = \Sym_n \big(\Sym_{\alpha_0} \otimes \Sym_{\alpha_1} \otimes \Sym_{\alpha_2} \otimes \cdots
\big)\big(H^{\otimes \alpha_0}_0 \otimes H_1^{\otimes \alpha_1} \otimes H_2^{\otimes \alpha_2} \otimes \cdots\big)\\
&\quad = \Sym_n \big(H^{\odot \alpha_0}_0 \otimes H_1^{\odot \alpha_1} \otimes H_2^{\odot \alpha_2} \otimes \cdots\big).
\end{align*}
This shows that the image of the operator $\Sym_n$ in \eqref{ns7} is the whole space $H_0^{ \odot \alpha_0} \odot H_1^{\odot  \alpha_1} \odot H_2^{\odot  \alpha_2} \odot \cdots \, n!$\,. Hence, we only need to prove that this operator is an isometry.

 Fix any $f_i, g_i \in H_i$ with $i\in\mathbb Z_+$ and any $\alpha \in \mathbb{Z}_{+,0}^\infty$. Then, by \eqref{ns2}
 \begin{align*}\label{ns8}
 & \big(\Sym_n \big(f_0^{\otimes \alpha_0} \otimes f_1^{\otimes \alpha_1} \otimes f_2^{\otimes
\alpha_2} \otimes \cdots\big), \Sym_n \big(g_0^{\otimes \alpha_0} \otimes g_1^{\otimes \alpha_1} \otimes g_2^{\otimes
\alpha_2} \otimes \cdots\big)\big)_{H^{\odot n}} 
\nonumber\\
&\quad =\big(\Sym_n \big(f_0^{\otimes \alpha_0} \otimes f_1^{\otimes \alpha_1} \otimes f_2^{\otimes
\alpha_2} \otimes \cdots\big), g_0^{\otimes \alpha_0} \otimes g_1^{\otimes \alpha_1} \otimes g_2^{\otimes
\alpha_2} \otimes \cdots\big)_{H^{\otimes n }} 
\nonumber\\
& \quad =\frac1{n!}\sum_{\sigma_0 \in S_{\alpha_0}} \big( f_0, g_0\big)_{H_0}^{\alpha_0}.\sum_{\sigma_1 \in S_{\alpha_1}} \big( f_1, g_1\big)_{H_1}^{\alpha_1} \cdots \nonumber\\
&\quad = \frac1{n!}\big(f_0^{\otimes \alpha_0}, g_0^{\otimes \alpha_0}\big)_{H_0^{\odot \alpha_0}} \alpha_0!
\big(f_1^{\otimes \alpha_1}, g_1^{\otimes \alpha_1}\big)_{H_1^{\odot \alpha_1}} \alpha_1! \cdots\nonumber \\
&\quad =\frac1{n!} \big(f_0^{\otimes \alpha_0} \otimes f_1^{\otimes \alpha_1} \otimes \cdots ,\, g_0^{\otimes \alpha_0} \otimes g_1^{\otimes \alpha_1}
\otimes \cdots\big)_{H_0^{\odot  \alpha_0}\otimes H_1^{\odot  \alpha_1} \otimes \cdots}\alpha_0!\alpha_1!\cdots\,.
\end{align*}
Since the set of all vectors of the form $f_i^{\otimes \alpha_i}$ with $f_i \in H_i$ is a total subset of $H_i^{\odot \alpha_i}$,
we conclude that the operator in \eqref{ns7} is indeed an isometry.\end{proof}

We define the symmetrization operator
\begin{equation}\label{yutr67r76r}\Sym: \bigoplus_{\alpha\in \mathbb Z_{+,0}^\infty} \big(H_0^{ \odot \alpha_0} \otimes H_1^{\odot  \alpha_1} \otimes H_2^{\odot  \alpha_2} \otimes \cdots \big){\alpha_0! \alpha_1! \alpha_2! \cdots}\to\mathcal F(H)\end{equation}
so that the restriction of $\Sym$ to each space 
$$\big(H_0^{ \odot \alpha_0} \otimes H_1^{\odot  \alpha_1} \otimes H_2^{\odot  \alpha_2} \otimes \cdots \big){\alpha_0! \alpha_1! \alpha_2! \cdots}$$ is equal to $\Sym_{|\alpha|}$.
By Lemmas \ref{lem1.11} and  \ref{lem2}, we get

\begin{lemma}\label{gfut7r7}
The symmetrization operator $\Sym$ is a unitary operator.

\end{lemma}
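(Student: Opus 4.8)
The plan is to realize $\Sym$ as an orthogonal direct sum of the operators $\Sym_{|\alpha|}$ indexed by $\alpha\in\mathbb Z_{+,0}^\infty$, and then to invoke the elementary fact that an orthogonal direct sum of unitary operators is itself unitary. First I would observe that, by construction, the domain of $\Sym$ in \eqref{yutr67r76r} is the Hilbert-space orthogonal direct sum over $\alpha$ of the summands $\big(H_0^{\odot\alpha_0}\otimes H_1^{\odot\alpha_1}\otimes\cdots\big)\alpha_0!\alpha_1!\cdots$; this is precisely the definition of that direct sum. In particular, the restriction of $\Sym$ to the $\alpha$-th domain summand is, by definition, the operator $\Sym_{|\alpha|}$.

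Next, I would recall from Lemma \ref{lem1.11} that the target space $\mathcal F(H)$ decomposes as the orthogonal direct sum, over the \emph{same} index set $\mathbb Z_{+,0}^\infty$, of the summands $\big(H_0^{\odot\alpha_0}\odot H_1^{\odot\alpha_1}\odot\cdots\big)|\alpha|!$. Thus both the domain and the range of $\Sym$ are orthogonal direct sums indexed by $\alpha$, and by Lemma \ref{lem2} the $\alpha$-th component $\Sym_{|\alpha|}$ is a unitary operator from the $\alpha$-th domain summand onto the $\alpha$-th range summand. Note that the combinatorial factors $\alpha_0!\alpha_1!\cdots$ and $|\alpha|!$ are already absorbed into the scaled inner products, so that Lemma \ref{lem2} directly yields $\|\Sym_{|\alpha|}f_\alpha\|=\|f_\alpha\|$ in the relevant norms, and I need not revisit any of these normalizations.

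It then remains only to assemble the component unitaries. For any vector $f=(f_\alpha)_\alpha$ in the domain, with $f_\alpha$ in the $\alpha$-th summand, orthogonality of the range summands gives $\|\Sym f\|^2=\sum_\alpha\|\Sym_{|\alpha|}f_\alpha\|^2=\sum_\alpha\|f_\alpha\|^2=\|f\|^2$, the middle equality being the isometry property of Lemma \ref{lem2}. Hence $\Sym$ is an isometry. For surjectivity I would argue that, since each $\Sym_{|\alpha|}$ maps onto the $\alpha$-th range summand and these summands together span $\mathcal F(H)$ by Lemma \ref{lem1.11}, the range of $\Sym$ is dense; being the range of an isometry it is also closed, hence all of $\mathcal F(H)$. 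Therefore $\Sym$ is unitary.

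I expect no genuine obstacle here, since the substantive work has already been isolated in Lemmas \ref{lem1.11} and \ref{lem2}. The only point that deserves a word of care is the bookkeeping observation that the two direct-sum decompositions are indexed by the identical set $\mathbb Z_{+,0}^\infty$, so that $\Sym$ indeed acts summand-by-summand and the passage from the componentwise unitarity of Lemma \ref{lem2} to the global unitarity is automatic.
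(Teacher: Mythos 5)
Your proof is correct and follows exactly the route the paper takes: the paper derives Lemma \ref{gfut7r7} directly from Lemmas \ref{lem1.11} and \ref{lem2}, and your argument simply fills in the routine verification that an orthogonal direct sum of unitaries, acting summand-by-summand between matching orthogonal decompositions, is unitary. No gap; your assembly of the componentwise isometry and surjectivity is precisely what the paper leaves implicit.
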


\begin{remark}
Let us assume that each Hilbert space $H_k$ is
one-dimensional and in each $H_k$ we fix a vector $e_k \in H_k$ such that $||e_k||= 1$.
Thus, $(e_k)_{k=0}^\infty$ is an orthonormal basis of $H$. By Lemma \ref{gfut7r7},  the set of the vectors $$
\left(\big(\alpha_0!\alpha_1!\alpha_2! \cdots\big)^{-\frac{1}{2}}e_0^{\otimes \alpha_0} \odot e_1^{\otimes \alpha_1} \odot e_2^{\otimes \alpha_2} \odot \cdots\right)_{\alpha \in \mathbb{Z}_{+,0}^\infty}$$
is an orthonormal basis of $\mathcal F(H)$. This basis  is called a basis of occupation numbers.

\end{remark}

\section{An orthogonal decomposition of $L^2(\mathcal D',\mu)$}

We want to apply the general result about the orthogonal decomposition of the Fock space to the case of $\mathcal{F}(H)$,
where $H = L^2 (\mathbb{R}^d \times \mathbb{R}, dx \,\sigma(x,ds))$.
We note that, by \eqref{equ 3.2}, for each $x\in\R^d$, the set of polynomials is dense in $L^2(\R,\sigma(x,ds))$.
We denote by $(q^{(n)}(x,s))_{n\ge0}$
the sequence of monic polynomials which are orthogonal
 with respect to the
measure $\sigma(x,ds)$.
These polynomials satisfy the following recursive formula:
\begin{equation}
\begin{split}
&sq^{(n)}(x,s) = q^{(n+1)}(x,s) + b_n(x)q^{(n)}(x,s) + a_n(x)q^{(n-1)}(x,s),\quad n \geq 1,\\
&sq^{(0)}(x,s) = q^{(1)}(x,s) + b_0(x)
\end{split}\label{futyr457iio}
\end{equation}
with some $b_n(x)\in\R$ and $a_n(x)>0$. 
[Note that if the support of $\sigma(x,ds)$ consists of $k<\infty$ points, then, for $n\ge k$, we set $q^{(n)}(x,s)=0$, $a_n(x)=0$ with $b_n(x)\in\R$ being arbitrary.]

From now on, we will assume that the following condition is satisfied:

\begin{itemize}
\item[(A)] For each $n\in\mathbb N$, the function $a_n(x)$ from \eqref{futyr457iio} is locally bounded on $\mathbb R^d$, i.e., for each $\Lambda\in\mathcal B_0(\mathbb R^d)$,
$\sup_{x\in\Lambda}a_n(x)<\infty$.
\end{itemize}

Denote by $\mathfrak L$ the linear space of all functions on $\mathbb R^d \times \mathbb R$ which have the form
\begin{equation}\label{fty6r6}
f(x,s) = \sum_{k=0}^n a_k(x) q^{(k)}(x,s),\end{equation} where $n \in \mathbb N$, $a_k \in \mathcal D$, $k=0,1,\ldots,n$. 

\begin{lemma}\label{giyr8i8}
The space $\mathfrak L$ is densely embedded into $H$.
\end{lemma}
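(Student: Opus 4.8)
The plan is to prove separately the inclusion $\mathfrak L\subset H$ and the density of $\mathfrak L$ in $H$, exploiting throughout the orthogonality of the polynomials $q^{(k)}(x,\cdot)$ in $L^2(\R,\sigma(x,ds))$. For the inclusion, I would fix $f=\sum_{k=0}^n\varphi_k\,q^{(k)}\in\mathfrak L$ with $\varphi_k\in\mathcal D$ and compute its $H$-norm. Multiplying the recursion \eqref{futyr457iio} for $q^{(n)}$ by $q^{(n-1)}$, and the recursion for $q^{(n-1)}$ by $q^{(n)}$, then integrating against $\sigma(x,ds)$ and using orthogonality, yields the standard relation $\|q^{(n)}(x,\cdot)\|^2_{L^2(\sigma(x,ds))}=a_n(x)\,\|q^{(n-1)}(x,\cdot)\|^2_{L^2(\sigma(x,ds))}$; since $q^{(0)}\equiv1$ and $\sigma(x,ds)$ is a probability measure, this gives $\|q^{(k)}(x,\cdot)\|^2_{L^2(\sigma(x,ds))}=\prod_{j=1}^k a_j(x)$. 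By orthogonality the cross terms vanish, so
\begin{equation*}
\int_\R|f(x,s)|^2\,\sigma(x,ds)=\sum_{k=0}^n \varphi_k(x)^2\prod_{j=1}^k a_j(x).
\end{equation*}
Each $\varphi_k\in C_0^\infty(\R^d)$ is bounded with support in some $\Lambda\in\mathcal B_0(\R^d)$, while by assumption (A) the products $\prod_{j=1}^k a_j(x)$ are bounded on $\Lambda$; integrating in $x$ therefore gives a finite quantity, so $f\in H$ and the embedding is continuous.

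For the density, I would show $\mathfrak L^\perp=\{0\}$ in $H$. Let $g\in H$ be orthogonal to $\mathfrak L$. By linearity it suffices to test against $f=\varphi\,q^{(k)}$ for a single $k$ and a single $\varphi\in\mathcal D$; since $g$ and $\varphi\,q^{(k)}$ both lie in $H$, their product is in $L^1(dx\,\sigma(x,ds))$, so Fubini applies and gives $\int_{\R^d}\varphi(x)G_k(x)\,dx=0$, where $G_k(x):=\int_\R g(x,s)q^{(k)}(x,s)\,\sigma(x,ds)$. By Cauchy--Schwarz, $|G_k(x)|\le\big(\int_\R g(x,s)^2\sigma(x,ds)\big)^{1/2}\big(\prod_{j=1}^k a_j(x)\big)^{1/2}$, and the right-hand side is locally integrable because the first factor lies in $L^2(\R^d,dx)$ and the second is locally bounded by (A); hence $G_k\in L^1_{\mathrm{loc}}(\R^d)$. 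As $\varphi$ ranges over $C_0^\infty(\R^d)$, the fundamental lemma of the calculus of variations yields $G_k(x)=0$ for a.e.\ $x$.

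Finally, since this holds for every $k\in\mathbb Z_+$, the intersection of countably many null sets is null, so there is a single full-measure set of $x$ on which $G_k(x)=0$ simultaneously for all $k$. For each such $x$, the function $g(x,\cdot)$ is orthogonal in $L^2(\R,\sigma(x,ds))$ to every $q^{(k)}(x,\cdot)$, hence to every polynomial, as the monic polynomials $q^{(0)},\dots,q^{(n)}$ span all polynomials of degree $\le n$. Because the polynomials are dense in $L^2(\R,\sigma(x,ds))$ by \eqref{equ 3.2}, we conclude $g(x,\cdot)=0$ in $L^2(\R,\sigma(x,ds))$ for a.e.\ $x$, whence $g=0$ in $H$. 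I expect the main technical point to be the measurable dependence of the polynomials $q^{(k)}(x,\cdot)$ — equivalently of the coefficients $a_j(x),b_j(x)$ — on the parameter $x$, which is needed both to make $G_k$ a genuine measurable function and to legitimize the a.e.\ statements and their countable intersection; this follows by applying Gram--Schmidt to the moments $\int_\R s^m\sigma(x,ds)$, which are measurable in $x$ by the standing measurability assumption on $x\mapsto\sigma(x,\Delta)$.
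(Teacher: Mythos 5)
Your proposal is correct and takes essentially the same route as the paper: the inclusion $\mathfrak L\subset H$ via the identity $\int_\R q^{(k)}(x,s)^2\,\sigma(x,ds)=a_1(x)\cdots a_k(x)$ combined with condition (A), and density by showing that any $g\perp\mathfrak L$ satisfies $\int_\R g(x,s)\,q^{(k)}(x,s)\,\sigma(x,ds)=0$ for a.e.\ $x$ and every $k$, then invoking the density of polynomials in $L^2(\R,\sigma(x,ds))$ guaranteed by \eqref{equ 3.2}. The only deviations are cosmetic — you derive the norm identity from the three-term recursion where the paper cites Chihara, and you conclude $G_k=0$ a.e.\ via the fundamental lemma in $L^1_{\mathrm{loc}}$ where the paper shows $G_k\in L^2(\Lambda,dx)$ and uses density of test functions there; your explicit attention to the measurability of $x\mapsto q^{(k)}(x,\cdot)$ and to the countable intersection of null sets makes precise points the paper leaves implicit.
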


\begin{proof} Let $f(x,s)=a(x)q^{(k)}(x,s)$, where $a\in\mathcal D$.   Let us show that $f\in H$.
Denote $\Lambda:=\operatorname{supp}(a)$.
We have, for some $C>0$,
\begin{equation}\label{76r8}\int_{\mathbb R^d}\int_{\mathbb R}dx\,\sigma(x,ds)f(x,s)^2\le C\int_\Lambda dx\int_{\mathbb R}\sigma(x,ds)\,q^{(k)}(x,s)^2. \end{equation}
If $k = 0$, then $q^{(0)} (x,s) =1$, and the right hand side of \eqref{76r8} is evidently finite.
By the theory of orthogonal polynomials (see e.g. \cite{Chihara})
\begin{equation}\label{fr6de6}\int_{\mathbb{R}} \sigma(x,ds)\,q^{(k)}(x,s)^2 = a_1(x)a_2(x) \cdots a_k(x), \quad k \geq1.\end{equation}
Hence we continue \eqref{76r8}
$$\le  C\int_\Lambda dx\,a_1(x)a_2(x) \cdots a_k(x)<\infty$$
by (A). Thus, $\mathfrak L\subset H$.

We now have to show that $\mathfrak L$ is a dense subset of $H$. Let $g\in H$
be such that $(g,f )_{H} = 0$ for all $f \in \mathfrak{L}$.
Hence for any $a \in \mathcal{D}$ and $k \geq 0$
$$\int_{\mathbb{R}^d}dx \int_\mathbb{R} \sigma(x,ds) \, g(x,s)\, a(x)\, q^{(k)}(x,s) =0.$$
Fix any compact set $\Lambda$ in $\mathbb{R}^d$ and let $a \in \mathcal{D}$ be such that the support of $a$ is a subset of $\Lambda$. Then,
$$\int_{\mathbb{R}^d}dx\, a(x)\left( \int_\mathbb{R} \sigma(x,ds) \, g(x,s)\,\, q^{(k)}(x,s)\right) =0.$$
Hence
\begin{equation}\label{vfufs}
\int_\Lambda \,dx\, a(x)\left(\int_\mathbb{R} \sigma(x,ds) \, g(x,s) \, q^{(k)}(x,s)\right) =0.
\end{equation}
We state that the function
$$\Lambda \ni x \mapsto \int_\mathbb{R} \sigma(x,ds) \, g(x,s)\,q^{(k)}(x,s)$$
belongs to $L^2 (\Lambda, \, dx)$. Indeed, if $k = 0$, then $q^{(0)} (x,s) =1$ and this statement evidently follows from Cauchy's
inequality. Assume that $k \geq 1$.
Then by Cauchy's inequality, \eqref{76r8}, and condition (A),
\begin{align*}
 &\int_\Lambda \,dx \bigg(\int_\mathbb{R} \sigma(x,ds) \, g(x,s) \,
 q^{(k)}(x,s)\bigg)^2\\
& \quad \leq\int_\Lambda \,dx \int_\mathbb{R} \sigma(x,ds_1) \, g(x,s_1)^2 \, \int_{\mathbb{R}} \sigma(x,ds_2)\,q^{(k)}(x,s_2)^2\\
 &=\int_\Lambda dx \int_\mathbb R \sigma(x, ds)\, g(x,s)^2 \, a_1(x)a_2(x) \cdots a_k(x)\\
&\leq \left( \prod_{i=1}^k \sup_{x \in \Lambda} a_i(x)\right)
\int_\Lambda dx \int_\mathbb R \sigma(x,ds)\, g(x,s)^2 < \infty.
\end{align*}

Since the set of all functions $a \in \mathcal{D}$ with support in $\Lambda$ is dense in $L^2(\Lambda, dx)$, we
therefore conclude from \eqref{vfufs} that, for $dx$-a.a $x \in \Lambda$,
\begin{equation}\label{eq111.1}
\int_\mathbb{R} \sigma(x,ds) \, g(x,s)\, q^{(k)}(x,s) =0, \quad \forall  k \geq 0.
\end{equation}
Since $g \in H$, we get that, for $dx$-a.a. $x \in \mathbb{R}^d$, $g(x,\cdot) \in L^2(\mathbb{R}, \sigma(x,ds))$.
Since $\{q^{(k)}(x,\cdot)\}_{k=0}^\infty$ form an orthogonal basis in $L^2(\mathbb R, \sigma(x,ds))$, we conclude from \eqref{eq111.1}
that for $dx$-a.a. $x \in \mathbb R^d$ $g(x,s) =0$ for $ \sigma(x,ds)$-a.a. $s \in  \mathbb{R}$.
From here, we easily conclude  that $g=0$ as an element of $H$. Hence $\mathfrak L$ is indeed dense in $H$.
 \end{proof}

For each $n \in \mathbb Z_+$, we define $$\mathfrak L_n: =\big\{g_n(x,s) = f(x) \, q^{(n)}(x,s)  \mid f \in \mathcal D \big\}.$$
We have $\mathfrak L_n \subset \mathfrak L$, and the linear span of the $\mathfrak L_n$ spaces coincides with $\mathfrak L$.
 For any $g_n(x,s) = f_n(x) \, q^{(n)}(x,s) \in \mathfrak L_n$ and $g_m(x,s) = f_m(x) \, q^{(m)}(x,s)\in \mathfrak L_m$, $n, m \in \mathbb Z_+$, we have
\begin{equation}\label{ns10}
\begin{split}
(g_n, g_m)_H & = \int_{\mathbb {R}^d \times \mathbb {R}} g_n(x,s)\, g_m(x,s) dx \,\sigma (x,ds) \\
& = \int_{\mathbb {R}^d}f_n(x)\, f_m(x) \bigg(\int_\mathbb {R} q^{(n)} (x,s)\, q^{(m)}(x,s)\,\sigma(x,ds)\bigg)dx.
\end{split}
\end{equation}
Hence, if $n \neq m$, then $$(g_n,g_m)_H =0,$$ which implies that the linear spaces $\{\mathfrak L_n\}_{n=0}^\infty$ are
mutually orthogonal in $H$. Denote by $H_n$ the closure of $\mathfrak L_n $ in $H$. Then by Lemma \ref{giyr8i8},
$H =\bigoplus_{n=0}^\infty H_n$.

By \eqref{ns10}, setting $n =m$, we  get
\begin{equation}\label{ns11}
\|g_n\|_{H_n}^2  = \int_{\mathbb {R}^d}f_n^2(x)\,\bigg(\int_\mathbb {R} q^{(n)} (x,s)^2\,\sigma(x,ds)\bigg)dx = \int_{\mathbb {R}^d}f_n^2(x) \rho_n(dx),
\end{equation}
where $$\rho_n(dx) = \bigg(\int_\mathbb {R} q^{(n)} (x,s)^2\,\sigma(x,ds)\bigg)dx$$ is a measure on $(\mathbb{R}^d, \mathcal{B}(\mathbb{R}^d))$.
Consider a linear operator $$ \mathcal{D}\ni f_n\mapsto
\big(J_nf_n\big)(x,s) := f_n(x)q^{(n)}(x,s)\in
 \mathfrak L_n .$$
The image of $J_n$ is clearly the whole $\mathfrak L_n$. Now, $\mathfrak L_n$ is dense in $H_n$, while $\mathcal D$ is evidently dense in
$L^2(\mathbb R^d, \rho_n(dx))$. By \eqref{ns11}, for each $f_n\in\mathcal D$,
$$\|J_nf_n\|_{H_n} = \|f_n\|_{L^2(\mathbb R^d,\,\rho_n(dx))}.$$
Therefore, we can extend the operator $J_n$ by continuity to a unitary operator
\begin{equation}\label{ns12}
J_n: L^2(\mathbb R^d,\rho_n(dx)) \rightarrow H_n.
\end{equation}
In particular, $$H_n = \left\{f_n(x)\,q^{(n)}(x,s)\mid f_n \in L^2(\mathbb R^d, \rho_n(dx))\right\}.$$
Therefore, for each $k\geq 2$
\begin{equation*}
\begin{split}
H_n^{\otimes k} & = \bigg\{f_n^{(k)}(x_1,\ldots,x_k)\, q^{(n)}(x_1,s_1)\cdots q^{(n)}(x_k,s_k)\mid \\
&\quad f_n^{(k)} \in L^2(\mathbb R^d, \rho_n(dx))^{\otimes k}=L^2\big((\mathbb R^d)^k,\rho_n(dx_1)\cdots \rho_n(dx_k)\big)\bigg\}.
\end{split}
\end{equation*}
Since the operator $J_n$ in \eqref{ns12} is  unitary, we get that the operator
$$J_n^{\otimes k}:L^2(\mathbb R^d, \rho_n(dx))^{\otimes k} \rightarrow H_n^{\otimes k}$$ is also  unitary. The restriction of
$J_n^{\otimes k}$ to $L^2(\mathbb R^d, \rho_n(dx))^{\odot k}$ is a unitary operator
\begin{equation}\label{ns13}
J_n^{\otimes k}:L^2(\mathbb R^d, \rho_n(dx))^{\odot k} \rightarrow H_n^{\odot k}.
\end{equation}
Indeed, take any $f_n \in L^2(\mathbb R^d, \rho_n(dx))$. Then $f_n^{\otimes k} \in L^2(\mathbb R^d, \rho_n(dx))^{\odot k}$ and the set of all
such vectors is total in $L^2(\mathbb R^d, \rho_n(dx))^{\odot k}$. Now, by the definition of $J_n^{\otimes k}$, we get
\begin{align*}
J_n^{\otimes k}f_n^{\otimes k} = (J_n\,f_n)^{\otimes k}  \in H_n^{\odot k},
\end{align*}
and furthermore the set of all vectors of the form $(J_n\,f_n)^{\otimes k}$ is total in $H_n^{\odot k}$.
Hence, the statement follows.

For any $f_n^{(k)} \in L^2(\mathbb R^d, \rho_n(dx))^{\otimes k}$,
\begin{align*}
\big(J_n^{\otimes k}f_n^{(k)}\big)(x_1, s_1, \ldots, x_k, s_k) = f_n^{(k)}(x_1, \ldots , x_k) q^{(n)}(x_1,s_1) \cdots q^{(n)}(x_k,s_k).
\end{align*}
Hence, the unitary operator \eqref{ns13} acts as follows
\begin{align*}
\begin{split}
&L^2(\mathbb R^d, \rho_n(dx))^{\odot k} \ni f_n^{(k)}(x_1, \ldots , x_k)\\
& \quad \mapsto \big(J_n^{\otimes k}f_n^{(k)}\big)(x_1, s_1, \ldots, x_k, s_k) = f_n^{(k)}(x_1, \ldots , x_k) q^{(n)}(x_1,s_1) \cdots q^{(n)}(x_k,s_k).
\end{split}
\end{align*}
Thus, each function $g_n^{(k)} \in H_n^{\odot k}$ has a representation
$$g_n^{(k)} ( x_1,s_1, \ldots , x_k,s_k) = f_n^{(k)}(x_1, \ldots , x_k) q^{(n)}(x_1,s_1) \cdots q^{(n)}(x_k,s_k),$$ where
$f_n^{(k)} \in L^2(\mathbb R^d, \rho_n(dx))^{\odot k}$ and
$\|g_n^{(k)}\|_{H_n^{\odot k}} = \|f_n^{(k)}\|_{L^2(\mathbb R^d, \rho_n(dx))^{\odot k}}$.

For each $\alpha \in \mathbb Z_{+,0}^\infty$, we consider the Hilbert space
\begin{align}\label{by4.1}
L_\alpha^2\big((\mathbb R^d)^{|\alpha|}\big):=
L^2(\mathbb R^d, \rho_0(dx))^{\odot \alpha_0} \otimes L^2(\mathbb R^d, \rho_1(dx))^{\odot \alpha_1} \otimes \cdots.
\end{align}
We now define a unitary operator
$$J_\alpha: L_\alpha^2\big((\mathbb R^d)^{|\alpha|}\big) \rightarrow H_0^{\odot \alpha_0} \otimes H_1^{\odot \alpha_1} \otimes \dotsm,$$
where $$J_\alpha = J_0^{\otimes \alpha_0} \otimes J_1^{\otimes \alpha_1} \otimes \cdots .$$
We evidently have, for each $f_\alpha \in L_\alpha^2\big((\mathbb R^d)^{|\alpha|}\big)$,
\begin{align*}
\begin{split}
&\big(J_\alpha \, f_\alpha\big)(x_1,s_1,x_2,s_2, \ldots , x_{|\alpha|},s_{|\alpha|})\\
& \quad = f_\alpha(x_1, x_2, \ldots , x_{|\alpha|}) q^{(0)}(x_1,s_1)\cdots q^{(0)}(x_{\alpha_0},s_{\alpha_0})\\
&\quad \times   q^{(1)}(x_{\alpha_0 +1},
s_{\alpha_0+1})\cdots q^{(1)}(x_{\alpha_0 + \alpha_1},s_{\alpha_0+\alpha_1}) \cdots.
\end{split}
\end{align*}
For each $\alpha \in \mathbb{Z}_{+,0}^\infty$, we define a Hilbert space
$$\mathcal G_\alpha: = L_\alpha^2\big((\mathbb R^d)^{|\alpha|}\big) \alpha_0!\alpha_1! \cdots .$$
The  $J_\alpha$ is evidently a unitary operator
$$J_\alpha: \mathcal G_\alpha \rightarrow (H_0^{\odot \alpha_0} \otimes H_1^{\odot \alpha_1} \otimes \cdots) \alpha_0! \alpha_1! \cdots.$$
Denote
$\mathcal G:=  \bigoplus_{\alpha \in \mathbb Z_{+,0}^\infty} \mathcal G_\alpha$. Hence, we can construct a unitary operator
$$ J: \mathcal G \to \bigoplus_{\alpha\in\mathbb Z_{+,0}^\infty} (H_0^{\odot \alpha_0} \otimes H_1^{\odot \alpha_1} \otimes \cdots) \alpha_0! \alpha_1! \cdots$$
by setting $J:= \bigoplus_{\alpha\in\mathbb Z_{+,0}^\infty}  J_\alpha$.
By Lemma \ref{gfut7r7}, we get a unitary operator
$\mathcal R: \mathcal G \to\mathcal F(H)$,
by setting
$\mathcal R:=\Sym J$.  Thus, by Theorem \ref{ftydyr7rdaufc}, we get

\begin{theorem}\label{abcd26}
Let condition (A) be satisfied.
 We have a unitary isomorphism
$
\mathcal{K}: \mathcal{G} \rightarrow L^2(\mathcal{D}', \mu)
$
given by
$ \mathcal{K}:=I\mathcal R$,
where the unitary operator $I: \mathcal{F}(H) \rightarrow L^2(\mathcal{D}', \mu)$ is from Theorem \ref{ftydyr7rdaufc}.

\end{theorem}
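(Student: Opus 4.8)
The plan is to obtain $\mathcal K$ as a composition of two unitary operators whose unitarity has already been established, so that the statement requires no further analytic work and is purely a matter of bookkeeping. Concretely, I would exhibit the chain
$$\mathcal G \xrightarrow{\ \mathcal R\ } \mathcal F(H) \xrightarrow{\ I\ } L^2(\mathcal D',\mu)$$
and conclude that $\mathcal K = I\mathcal R$ is unitary because a composition of unitary operators with matching range and domain is unitary.

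First I would recall that by Theorem \ref{ftydyr7rdaufc} the operator $I\colon\mathcal F(H)\to L^2(\mathcal D',\mu)$ is unitary; this is where all the probabilistic content sits, namely the existence of $\mu$ and the identification of the multiple integrals $A(\varphi_1)\cdots A(\varphi_n)\Omega$ with the products $\langle\omega,\varphi_1\rangle\cdots\langle\omega,\varphi_n\rangle$. Second, I would verify that $\mathcal R\colon\mathcal G\to\mathcal F(H)$ is unitary. Since $\mathcal R=\Sym\, J$ by definition, it suffices to note that $J=\bigoplus_{\alpha}J_\alpha$ is unitary and that $\Sym$ is unitary by Lemma \ref{gfut7r7}. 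The operator $J$ is unitary because each $J_\alpha=J_0^{\otimes\alpha_0}\otimes J_1^{\otimes\alpha_1}\otimes\cdots$ is a tensor product of the unitaries $J_n$ from \eqref{ns12}, and the external orthogonal direct sum of unitaries mapping one orthogonal family of summands onto another is again unitary. Composing, $\mathcal R$ is unitary, and hence so is $\mathcal K=I\mathcal R\colon\mathcal G\to L^2(\mathcal D',\mu)$, which is exactly the assertion.

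The main obstacle is not in this final assembly, which is formal, but in the unitarity of the building blocks, and that work is already carried out in the preceding section. In particular, the fact that each $J_n$ in \eqref{ns12} is an isometry \emph{onto} $H_n$ rests on Lemma \ref{giyr8i8} (density of $\mathfrak L$ in $H$ and finiteness of the defining norms), which in turn uses condition (A) through the orthogonal-polynomial identity \eqref{fr6de6}; likewise, the unitarity of $\Sym$ is the content of Lemmas \ref{lem1.11} and \ref{lem2}. Thus the only point I would take care to state explicitly is that condition (A) is precisely what guarantees $H=\bigoplus_{n=0}^\infty H_n$ with each $H_n\cong L^2(\mathbb R^d,\rho_n(dx))$, so that the decomposition \eqref{ur6u7} applies verbatim to this family $(H_n)_{n\ge0}$.

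Finally, to make the theorem useful rather than merely a composition identity, I would record the explicit action of $\mathcal K$ on a vector $f_\alpha\in\mathcal G_\alpha$, namely $\mathcal K f_\alpha = I\,\Sym_{|\alpha|}(J_\alpha f_\alpha)$, where $J_\alpha f_\alpha$ is the tensor product of the orthogonal polynomials $q^{(n)}$ weighted by $f_\alpha$. I would then check that in the translation-invariant (L\'evy) case, where $\sigma(x,ds)=\sigma(ds)$ and the coefficients $a_n,b_n$ are constant, this reduces to the Nualart--Schoutens decomposition of \cite{NS} in terms of orthogonalized Teugels martingales, thereby confirming the claim made in the introduction.
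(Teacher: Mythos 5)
Your proposal is correct and follows exactly the paper's own route: the theorem is stated in the paper without a separate proof precisely because it is the composition $\mathcal K = I\,\Sym\,J$ of the unitaries established beforehand ($J=\bigoplus_\alpha J_\alpha$ built from the $J_n$ of \eqref{ns12}, $\Sym$ from Lemma \ref{gfut7r7}, and $I$ from Theorem \ref{ftydyr7rdaufc}). Your added remarks on where condition (A) enters and on the explicit action of $\mathcal K$ are accurate but not needed for the statement itself.
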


\section{The unitary isomorphism $\mathcal K$ through multiple\\ stochastic integrals}

We will now give an interpretation of the unitary isomorphism $\mathcal K$ in terms of multiple stochastic integrals.
We will only present a sketch of the proof, omitting some technical
details.

Let us recall  the operators $A(\varphi)$ in $\mathcal F(H)$ defined by \eqref{rdtsw6u4ebg}.
Now, for each $k \in \mathbb N$, we define operators
\begin{equation}\label{tydy7erxx}A^{(k)}(\varphi): = a^+ (\varphi\otimes m_{k-1}) + a^0(\varphi\otimes m_{k}) + a^- (\varphi\otimes m_{k-1}).\end{equation}
In particular, $A^{(1)}(\varphi) = A(\varphi)$. 
The operator $A^{(k)}(\varphi)$ being symmetric, we denote by 
$A^{(k)}(\varphi)^\sim$ the closure of $A^{(k)}(\varphi)$.
For each $k\in\mathbb N$ and $\varphi \in \mathcal D$, we define
$Y^{(k-1)} (\varphi): = I (\varphi\otimes m_{k-1})$.
It can be shown that, for each $k\in\mathbb N$, {\it 
$IA^{(k)}(\varphi)^\sim I^{-1}$
is the operator of multiplication by the function $Y^{(k-1)}$.}

Suppose, for a moment, that the measures $\sigma(x,ds)$ do not depend on $x \in \mathbb R^d$.
For a fixed $\varphi \in \mathcal D$, let us orthogonalize in $L^2(\mathcal D', \mu)$ the functions $(Y^{(k)} (\varphi))_{k = 0}^\infty$.
This is of course equivalent to the orthogonalization of the monomials $(s^k)_{k=0}^\infty$ in $L^2(\mathbb R, \sigma)$. Denote by
$(q^{(k)})_{k=0}^\infty$ the system of monic orthogonal polynomials with respect to the measure $\sigma$. Denote
$(\varphi\otimes q^{(k)})(x,s):=\varphi(x)q^{(k)}(s)$.
Thus, the random variables
$$Z^{(k)}(\varphi): = I(\varphi\otimes  q^{(k)}),\quad k\in\mathbb Z_+,$$
appear as a result of the
orthogonalization of $(Y^{(k)}(\varphi))_{k=0}^\infty$. Since $q^{(0)}(s) = 1$, we have
$$Z^{(0)}(\varphi) = Y^{(0)}(\varphi) = \langle \cdot,\varphi  \rangle.$$
For each $k \geq 1$, we have a representation of $q^{(k)}(s)$ as follows:
$$q^{(k)}(s) = \sum_{i=0}^k b_i^{(k)}\,s^i.$$
Thus,
\begin{equation*}
Z^{(k)}(\varphi) = I(\varphi\otimes q^{(k)})  
= \sum_{i=0}^k b_i^{(k)} I(\varphi\otimes m_i)  = \sum_{i=0}^k b_i^{(k)} Y^{(i)} (\varphi).
\end{equation*}
Hence, under $I^{-1}$, the image  of the operator of multiplication by $Z^{(k)}(\varphi)$ is the operator
\begin{align*}
R^{(k)}(\varphi): &= \sum_{i=0}^k b_i^{(k)} (a^+(\varphi\otimes m_i) + a^- (\varphi\otimes m_i) + a^0 (\varphi\otimes m_{i+1}))\\
&= a^+(\varphi\otimes q^{(k)}) + a^- (\varphi\otimes q^{(k)}) + a^0 (\varphi\otimes\rho^{(k)}),
\end{align*}
where $\rho^{(k)}(s):=sq^{(k)}(s)$.

Let us now consider the general case, i.e., the case where the measure $\sigma(x, ds)$ does depend on $x \in \mathbb R^d$.
We are using the monic polynomials $(q^{(k)}(x,\cdot))_{k=0}^\infty$ which are orthogonal with respect to the measure $\sigma(x,ds)$. We have
$$q^{(k)}(x,s) = \sum_{i=0}^k b_i^{(k)}(x)\,s^i.$$
We  define 
\begin{equation*}
Z^{(k)}(\varphi): = I(\varphi q^{(k)})  
=  \sum_{i=0}^k Y^{(i)} (\varphi b_i^{(k)}) ,
\end{equation*}
where $(\varphi q^{(k)})(x,s):=\varphi(x)q^{(k)}(x,s)$. 
Hence, under $I^{-1}$, the image  of the operator of multiplication by $Z^{(k)}(\varphi)$ is the operator
\begin{align*}
R^{(k)}(\varphi): &= \sum_{i=0}^k  \big(a^+((\varphi b_i^{(k)})\otimes m_i ) + a^- ((\varphi b_i^{(k)})\otimes m_i ) + a^0 ((\varphi b_i^{(k)})\otimes m_{i+1} )\big)\\
&= a^+\bigg(\bigg(\varphi\sum_{i=0}^k b_i^{(k)}\bigg)\otimes m_i \bigg) + a^-\bigg(\bigg(\varphi\sum_{i=0}^k b_i^{(k)}\bigg)\otimes m_i \bigg) \\
& \quad + a^0\bigg(\bigg(\varphi\sum_{i=0}^k b_i^{(k)}\bigg)\otimes m_{i+1} \bigg)\\
&= a^+(\varphi q^{(k)}) + a^-(\varphi q^{(k)}) + a^0(\varphi \rho^{(k)}),
\end{align*}
where $\rho^{(k)}(x,s):=sq^{(k)}(x,s)$.

It is not hard to see that the above definitions and formulas can be easily extended to the case where the function $\varphi:\R^d\to\R$ is just measurable, bounded, and has compact support. In particular, for each $\Delta\in\mathcal B_0(\R^d)$, we will use the operators $Z^{(k)}(\Delta):=Z^{(k)}(\chi_\Delta)$.

We will now introduce a multiple Wiener--It\^o integral with respect to $Z^{(k)}$'s. So, we fix any $\alpha \in \mathbb{Z}_{+,0}^\infty$, $|\alpha|=n$, $ n \in \mathbb{N}$. Take any $\Delta_1,\dots, \Delta_n \in \mathcal{B}_0(\mathbb{R}^d)$, mutually disjoint. Then we define
\begin{align*}
&\int_{\Delta_1 \times \Delta_2 \times \cdots \times \Delta_n} dZ^{(0)}(x_1) \cdots dZ^{(0)}(x_{\alpha_0})dZ^{(1)}(x_{\alpha_0 + 1}) \cdots dZ^{(1)}(x_{\alpha_0 + \alpha_1})\\
&\quad\times dZ^{(2)}(x_{\alpha_0 + \alpha_1 +1}) \cdots\\
&=  \int_{(\mathbb{R}^d)^n} \chi_{\Delta_1}(x_1)\chi_{\Delta_2}(x_2) \cdots \chi_{\Delta_n}(x_n)dZ^{(0)}(x_1) \cdots dZ^{(0)}(x_{\alpha_0})\\
&\quad\times dZ^{(1)}(x_{\alpha_0 + 1}) \cdots dZ^{(1)}(x_{\alpha_0 + \alpha_1})dZ^{(2)}(x_{\alpha_0 + \alpha_1 +1}) \cdots\\
& :=Z^{(0)}(\Delta_1) \cdots Z^{(0)}(\Delta_{\alpha_0})Z^{(1)}(\Delta_{\alpha_0+1}) \cdots Z^{(1)}(\Delta_{\alpha_0+ \alpha_1})Z^{(2)}(\Delta_{\alpha_0+\alpha_1+1}) \cdots \, .
\end{align*}
Using the fact that the sets $\Delta_1, \dots ,\Delta_n$ are mutually disjoint,
\begin{align*}
&I^{-1}(Z^{(0)}(\Delta_1) \cdots Z^{(0)}(\Delta_{\alpha_0})Z^{(1)}(\Delta_{\alpha_0+1}) \cdots Z^{(1)}(\Delta_{\alpha_0+ \alpha_1})Z^{(2)}
(\Delta_{\alpha_0+\alpha_1+1}) \cdots )\\
&=R^{(0)}(\chi_{\Delta_1}) \cdots R^{(0)}(\chi_{\Delta_{\alpha_0}})R^{(1)}(\chi_{\Delta_{\alpha_0+1}}) \cdots R^{(1)}(\chi_
{\Delta_{\alpha_0+ \alpha_1}})R^{(2)} (\chi_{\Delta_{\alpha_0+\alpha_1+1}}) \cdots\\
&=a^+(\chi_{\Delta_1}q^{(0)}) \cdots a^+(\chi_{\Delta_{\alpha_0}} q^{(0)}) a^+(\chi_{\Delta_ {\alpha_0+1}} q^{(1)})\cdots
 a^+(\chi_{\Delta_{\alpha_0 + \alpha_1}} q^{(1)}) \\
&\quad\times a^+(\chi_{\Delta_{\alpha_0+\alpha_1+1}} q^{(2)}) \cdots \Omega \\
&=(\chi_{\Delta_1} q^{(0)}) \odot  \cdots \odot (\chi_{\Delta_{\alpha_0}} q^{(0)}) \odot (\chi_{\Delta_ {\alpha_0+1}}
q^{(1)}) \odot \cdots \odot (\chi_{\Delta_{\alpha_0 + \alpha_1}} q^{(1)}) \\
&\quad\odot (\chi_{\Delta_{\alpha_0+\alpha_1+1}} q^{(2)})\odot \cdots\\
&=\Sym_n\bigg(\bigg[(\chi_{\Delta_1} q^{(0)}) \odot  \cdots \odot (\chi_{\Delta_{\alpha_0}} q^{(0)})\bigg] \otimes  \bigg[(\chi_{\Delta_ {\alpha_0+1}}
 q^{(1)}) \odot \cdots \\
&\quad\odot (\chi_{\Delta_{\alpha_0 + \alpha_1}} q^{(1)})\bigg] \otimes  \cdots\bigg)\\
&= \Sym_n\bigg(\bigg[(\chi_{\Delta_1} \odot \cdots \odot \chi_{\Delta_{\alpha_0}}) (x_1, \ldots ,x_{\alpha_0})q^{(0)}(x_1,s_1) \cdots q^{(0)}(x_{\alpha_0},s_{\alpha_0})\bigg] \\
&\quad\otimes \bigg[(\chi_{\Delta_ {\alpha_0+1}} \odot \cdots \odot \chi_{\Delta_{\alpha_0 + \alpha_1}})
(x_{\alpha_0 + 1}, \ldots , x_{\alpha_0 + \alpha_1})\,q^{(1)}(x_{\alpha_0 +1},s_{\alpha_0+1})\\
&\quad\cdots q^{(1)}(x_{\alpha_0 + \alpha_1}, s_{\alpha_0 + \alpha_1})\bigg] \otimes \cdots\bigg)\\
&=\mathcal{R}\bigg((\chi_{\Delta_1} \odot \cdots \odot \chi_{\Delta_{\alpha_0}}) \otimes (\chi_{\Delta_ {\alpha_0+1}} \odot
\cdots \odot \chi_{\Delta_{\alpha_0 + \alpha_1}}) \otimes \cdots \bigg).
\end{align*}
Hence
\begin{align*}
&Z^{(0)}(\Delta_1) \cdots Z^{(0)}(\Delta_{\alpha_0})Z^{(1)}(\Delta_{\alpha_0+1}) \cdots Z^{(1)}(\Delta_{\alpha_0+ \alpha_1})Z^{(2)}
(\Delta_{\alpha_0+\alpha_1+1}) \cdots \\
&=\mathcal{K}((\chi_{\Delta_1} \odot \cdots \odot \chi_{\Delta_{\alpha_0}}) \otimes (\chi_{\Delta_ {\alpha_0+1}} \odot \cdots \odot
\chi_{\Delta_{\alpha_0 + \alpha_1}}) \otimes \cdots ).
\end{align*}
The set of all vectors of the form $$((\chi_{\Delta_1} \odot \cdots \odot \chi_{\Delta_{\alpha_0}}) \otimes (\chi_{\Delta_ {\alpha_0+1}} \odot
\cdots \odot \chi_{\Delta_{\alpha_0 + \alpha_1}}) \otimes \cdots )$$
is total in $\mathcal G_\alpha$. Therefore, by linearity and continuity,
we can extend the definition of the multiple Winner--It\^o integral to the whole space $\mathcal G_\alpha$. Thus, we get, for each $f_\alpha \in \mathcal{G}_\alpha$,
\begin{align*}
&\int_{(\mathbb{R}^d)^{|\alpha|}} f_\alpha (x_1, \dots , x_{|\alpha|})dZ^{(0)}(x_1) \cdots dZ^{(0)}(x_{\alpha_0}) dZ^{(1)}(x_{\alpha_0 +1}) \cdots dZ^{(1)}(x_{\alpha_0 +\alpha_1})\\
& \times dZ^{(2)}(x_{\alpha_0+\alpha_1+1}) \cdots = \mathcal{K} f_\alpha.
\end{align*}

Thus, we have the following theorem.
\begin{theorem}
 The unitary isomorphism $\mathcal{K}:\mathcal G \rightarrow L^2(\mathcal{D}',\mu)$ from Theorem~\ref{abcd26} is given by
\begin{align*}
&\mathcal G=\bigoplus_{\alpha \in \mathbb{Z}_{+,0}^\infty} \mathcal {G}_\alpha \ni (f_\alpha)_{\alpha \in {\mathbb{Z}_{+,0}^\infty}} = f \mapsto \mathcal{K}f\\
& \quad  = \sum_{\alpha \in \mathbb{Z}_{+,0}^\infty}\int_{(\mathbb{R}^d)^{|\alpha|}} f_\alpha (x_1, \dots , x_{|\alpha|})dZ^{(0)}(x_1) \cdots dZ^{(0)}(x_{\alpha_0}) \\
&\qquad \times dZ^{(1)}(x_{\alpha_0 +1}) \cdots dZ^{(1)}(x_{\alpha_0 +\alpha_1})dZ^{(2)}(x_{\alpha_0+\alpha_1+1}) \cdots.
\end{align*}
\end{theorem}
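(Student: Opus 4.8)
The plan is to verify the asserted formula first on a total set of ``simple'' integrands and then extend by linearity and continuity, using that the operator $\mathcal{K}$ of Theorem~\ref{abcd26} is an isometry on each summand $\mathcal{G}_\alpha$. Throughout I would work on the dense domain $\mathfrak{D}\subset\mathcal{F}(H)$ of finite vectors, where all the relevant creation, annihilation and neutral operators are defined and compose without ambiguity.

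First I would fix $\alpha\in\mathbb{Z}_{+,0}^\infty$ with $|\alpha|=n$ and mutually disjoint $\Delta_1,\dots,\Delta_n\in\mathcal{B}_0(\R^d)$, and record the identity, established earlier in this section, that $I$ conjugates the operator of multiplication by $Z^{(k)}(\varphi)$ into $R^{(k)}(\varphi)=a^+(\varphi q^{(k)})+a^-(\varphi q^{(k)})+a^0(\varphi\rho^{(k)})$. Applying this to the product $Z^{(0)}(\Delta_1)\cdots Z^{(1)}(\Delta_{\alpha_0+\alpha_1})Z^{(2)}(\Delta_{\alpha_0+\alpha_1+1})\cdots$ and using $I\Omega=1$, the image under $I^{-1}$ becomes the ordered product of the operators $R^{(k)}(\chi_{\Delta_i})$ applied to the vacuum $\Omega$.

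The key step is to exploit the mutual disjointness of the $\Delta_i$. At the first stage the annihilation and neutral parts annihilate $\Omega$, leaving $\chi_{\Delta_1}q^{(0)}$; at each subsequent stage, when $R^{(k')}(\chi_{\Delta'})$ acts on a vector whose $x$-variables are supported on a union of sets disjoint from $\Delta'$, both $a^-(\chi_{\Delta'}q^{(k')})$ and $a^0(\chi_{\Delta'}\rho^{(k')})$ vanish, since each carries a factor $\chi_{\Delta'}$ multiplied against a function supported off $\Delta'$. Proceeding inductively, only the creation operators survive, and the product collapses to the symmetric tensor product $(\chi_{\Delta_1}q^{(0)})\odot\cdots\odot(\chi_{\Delta_{\alpha_0}}q^{(0)})\odot(\chi_{\Delta_{\alpha_0+1}}q^{(1)})\odot\cdots$. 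It then remains to recognize this vector as $\mathcal{R}$ applied to an element of $\mathcal{G}_\alpha$: grouping the factors by polynomial degree and invoking the factorization $\Sym_n=\Sym_n(\Sym_{\alpha_0}\otimes\Sym_{\alpha_1}\otimes\cdots)$ from the proof of Lemma~\ref{lem2}, the vector equals $\Sym_n J_\alpha$ applied to $(\chi_{\Delta_1}\odot\cdots\odot\chi_{\Delta_{\alpha_0}})\otimes(\chi_{\Delta_{\alpha_0+1}}\odot\cdots)\otimes\cdots$, which is exactly $\mathcal{R}$ of that element by the definition $\mathcal{R}=\Sym J$ from Lemma~\ref{gfut7r7}. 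Applying $I$ and recalling $\mathcal{K}=I\mathcal{R}$ gives the desired identity for simple integrands.

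Finally I would extend the formula to arbitrary $f_\alpha\in\mathcal{G}_\alpha$. The simple tensors $(\chi_{\Delta_1}\odot\cdots)\otimes(\chi_{\Delta_{\alpha_0+1}}\odot\cdots)\otimes\cdots$ form a total set in $\mathcal{G}_\alpha$, and the multiple Wiener--It\^o integral is defined on them precisely as $\mathcal{K}$ of the corresponding vector; since $\mathcal{K}$ is unitary, its restriction to $\mathcal{G}_\alpha$ is an isometry onto its image in $L^2(\mathcal{D}',\mu)$, so the integral extends by linearity and continuity to all of $\mathcal{G}_\alpha$, agreeing there with $\mathcal{K}$. Taking the orthogonal sum over $\alpha\in\mathbb{Z}_{+,0}^\infty$ then yields the stated representation of $\mathcal{K}f$ for every $f=(f_\alpha)_{\alpha}\in\mathcal{G}$. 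I expect the main obstacle to lie in the rigorous handling of the second and third steps: the operators $R^{(k)}(\varphi)$ are unbounded, so one must check that the successive products keep the vectors inside $\mathfrak{D}$ (where the vanishing of the annihilation and neutral parts can be computed termwise) and justify the passage between multiplication operators in $L^2(\mathcal{D}',\mu)$ and their Fock-space images---exactly the technical points flagged as omitted in this sketch.
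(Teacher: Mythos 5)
Your proposal is correct and follows essentially the same route as the paper: conjugating the multiplication operators $Z^{(k)}(\chi_{\Delta_i})$ by $I$ into the operators $R^{(k)}(\chi_{\Delta_i})$, using disjointness of the $\Delta_i$ so that only the creation parts survive when acting on $\Omega$, recognizing the resulting symmetric tensor as $\mathcal{R}$ applied to a simple tensor in $\mathcal{G}_\alpha$, and extending by totality and continuity. In fact, your inductive justification of why the annihilation and neutral parts vanish makes explicit a step the paper states in one line, and your closing remarks match exactly the technical points the paper itself declares omitted.
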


\begin{center}
{\bf Acknowledgements}\end{center}
 E.L. acknowledges the financial support of the Polish
National Science Center, grant no.\ Dec-2012/05/B/ST1/00626, and of the SFB 701 ``Spectral
structures and topological methods in mathematics'', Bielefeld University.
The authors are grateful to the anonymous referee for many useful suggestions.

\end{document}